\LetLtxMacro\orgvdots\vdots
\LetLtxMacro\orgddots\ddots
\DeclareRobustCommand\vdots{%
	\mathpalette\@vdots{}%
}
\newcommand*{\@vdots}[2]{%
	\sbox0{$#1\cdotp\cdotp\cdotp\m@th$}%
	\sbox2{$#1.\m@th$}%
	\vbox{%
		\dimen@=\wd0 %
		\advance\dimen@ -3\ht2 %
		\kern.5\dimen@
		\dimen@=\wd2 %
		\advance\dimen@ -\ht2 %
		\dimen2=\wd0 %
		\advance\dimen2 -\dimen@
		\vbox to \dimen2{%
			\offinterlineskip
			\copy2 \vfill\copy2 \vfill\copy2 %
		}%
	}%
}
\DeclareRobustCommand\ddots{%
	\mathinner{%
		\mathpalette\@ddots{}%
		\mkern\thinmuskip
	}%
}
\newcommand*{\@ddots}[2]{%
	\sbox0{$#1\cdotp\cdotp\cdotp\m@th$}%
	\sbox2{$#1.\m@th$}%
	\vbox{%
		\dimen@=\wd0 %
		\advance\dimen@ -3\ht2 %
		\kern.5\dimen@
		\dimen@=\wd2 %
		\advance\dimen@ -\ht2 %
		\dimen2=\wd0 %
		\advance\dimen2 -\dimen@
		\vbox to \dimen2{%
			\offinterlineskip
			\hbox{$#1\mathpunct{.}\m@th$}%
			\vfill
			\hbox{$#1\mathpunct{\kern\wd2}\mathpunct{.}\m@th$}%
			\vfill
			\hbox{$#1\mathpunct{\kern\wd2}\mathpunct{\kern\wd2}\mathpunct{.}\m@th$}%
		}%
	}%
}
\newtheorem{theorem}{Theorem}
\newtheorem{corollary}[theorem]{Corollary}
\newtheorem{proposition}[theorem]{Proposition}
\theoremstyle{definition}
\newtheorem{definition}[theorem]{Definition}
\newtheorem{remark}[theorem]{Remark}
\newcommand{\calC}{{\mathcal C}}
\newcommand{\calD}{{\mathcal D}}
\newcommand{\calF}{{\mathcal F}}
\newcommand{\calG}{{\mathcal G}}
\newcommand{\calI}{{\mathcal I}}
\newcommand{\bbP}{{\mathbb P}}
\def\geq{\geqslant}
\def\leq{\leqslant}
\begin{document}
 
\title[Variations on Pascal's Theorem]{Variations on Pascal's Theorem}

\author{Ciro Ciliberto}
\address{Dipartimento di Matematica, Universit\`a di Roma Tor Vergata, Via O. Raimondo
 00173 Roma, Italia}
\email{cilibert@axp.mat.uniroma2.it}

\author{Rick Miranda}
\address{Department of Mathematics, Colorado State University, Fort Collins (CO), 80523,USA}
\email{rick.miranda@colostate.edu}
 
\subjclass{Primary 14H45, 14N05, 14N15, 14N20, 14N25; Secondary 14A25, 14Q15}
 
\keywords{rational normal curves, projective geometry, Pascal's theorem}
 
\maketitle

\tableofcontents

\begin{abstract}
In this paper we present a variety of statements that are in the spirit of the famous theorem of Pascal, often referred to as the Mystic Hexagon.  We give explicit equations describing the conditions for $d+4$ points to lie on rational normal curves.  A collection of problems of Pascal type are considered for quadric surfaces in $\bbP^3$.  Finally we reprove, using computer algebra methods, a remarkable theorem of Richmond, Segre, and Brown, for quadrics in $\bbP^4$ containing five general lines. 
\end{abstract}

\section*{Introduction} 

We recall the famous theorem of Pascal,
which states that for six points $p_i$ ($1\leq i \leq 6$) 
in general position in the plane,
they lie on a conic 
if and only if
the three derived points of intersection of the opposite lines of the hexagon 
with vertices equal to the $p_i$ lie on a line. 
We give the precise statement in Theorem \ref{PascalTheorem}.

Various authors have considered generalizations of Pascal's theorem
to prove statements which we will call ``of Pascal type".
The general form of such statements is the following.
Let $\calF$ and $\calG$ be irreducible families of varieties in a projective space.
Let $\calC$ be a collection of subsets of this projective space,
from which one can form a derived collection of subsets $\calD$ (also in this space)
using elementary projective geometry constructions
(e.g., taking spans, intersections, etc.).

A statement of Pascal type would be one with the following form:
\emph{There is a member of $\calF$ containing the collection $\calC$
if and only if there is a member of $\calG$ containing the derived collection $\calD$.}

For example Pascal's original theorem 
has $\calF$ being the space of conics in a given plane,
$\calG$ the space of lines, $\calC$ a sextuple of points, and $\calD$ triples of points,
with the triples being defined by the sextuples 
in the way indicated in Pascal's Theorem.

In this paper we will consider some instances of theorems of Pascal type.
When $\calF$ is the family of rational normal curves of degree $d$ in $\bbP^d$,
Caminata and Schaffler in \cite[Theorem A]{CS} have generalized  Pascal's Theorem   
when $\calC$ is the space of ($d+4$)-tuples of points 
in general position in $\bbP^d$.

When $\calF$ is the family of quadric hypersurfaces in $\bbP^4$,
and $\calC$ is a collection of five lines in general position there,
a remarkable theorem of Pascal type 
has been observed by Richmond \cite{R} in 1899
(in one direction) and full proofs were given 
by B. Segre \cite{S} and Brown \cite{B}
in 1945.

In this paper we first present a computer algebra approach to the original Pascal's Theorem,
first executed (it seems) by Stefanovi'c  and Milo'sevi'c in \cite{SM}.
Then we consider the problem of characterizing points on rational normal curves,
and give a condition for $d+4$ points to lie on one,
which simplifies the Caminata-Schaffler statement.

Secondly we turn our attention to the case 
of ten points on a quadric surface in $\bbP^3$;
we have partial results which are adjacent to but not exactly of Pascal type.

Finally we take up the case of five lines on quadrics in $\bbP^4$,
and re-prove the Richmond-Segre-Brown result via computer algebra methods,
which is in the same spirit as the approach in \cite{SM}.
The proof follows Segre's approach, but is purely algebraic,
and avoids the long complicated geometric arguments that Segre and Brown use. 

\section{Points on rational normal curves}
Any set of $d+3$ points in general position on $\bbP^d$
lie on a unique rational normal curve.

One can ask: given $d+4$ points in general position in $\bbP^d$,
how can we detect whether they lie on a rational normal curve?

For $d=2$, the question is answered by Pascal's Theorem for conics in the plane;
we remind the reader how this goes in Subsection \ref{n=2section}.

The case $n=3$ has been considered by Chasles \cite{Chasles};
he proposes a rather complicated condition which we do not see how to extend to higher dimensions.

We present an iterative approach that works for all $n$, and builds on Pascal's Theorem for $n=2$.  This is illustrated for $d=3$ in subsection \ref{RNCn=3}
before turning our attention to the general case in subsection \ref{RNCgeneral}.

\subsection{The case $d=2$}\label{n=2section}
We recall the famous ``Mystic Hexagon" Theorem of Pascal:

\begin{theorem}\label{PascalTheorem}
Let $p_1,\ldots,p_6$ be six points in the plane, no three collinear.
Let $L_i$ be the line joining $p_i$ to $p_{i+1}$ (indices taken modulo $6$),
for $1 \leq i \leq 6$.
Let $q_j$ be the point $L_j \cap L_{j+3}$ for $1 \leq i \leq 3$.
Then
the six points $\{p_i\}_{1 \leq i \leq 6}$ lie on an irreducible conic
if and only if the three points $\{q_j\}_{1 \leq j \leq 3}$ are collinear.
\end{theorem}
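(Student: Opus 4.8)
The plan is to prove both implications simultaneously by realizing the nine points $p_1,\dots,p_6,q_1,q_2,q_3$ as the complete intersection of two degenerate cubics, and then invoking the Cayley--Bacharach relation for plane cubics. First I would split the six sides into the two ``alternating triangles'' and set
\[
A := L_1 L_3 L_5, \qquad B := L_2 L_4 L_6,
\]
viewed as cubic curves (each a product of three lines). A direct incidence check shows that every vertex $p_k$ lies on both $A$ and $B$: since $p_k = L_{k-1}\cap L_k$ and consecutive indices have opposite parity, $p_k$ sits on one odd side and one even side. Likewise each diagonal point $q_j = L_j\cap L_{j+3}$ lies on one odd and one even side, hence on both $A$ and $B$. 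Thus all nine points lie on $A\cap B$. Using that no three $p_i$ are collinear, I would check that no odd side equals an even side (so $A,B$ share no component and $A\cap B$ has length $9$ by B\'ezout) and that the nine points are pairwise distinct; for instance $q_1=q_2$ would force a common point on $L_1\cap L_2=\{p_2\}$ and on $L_4\cap L_5=\{p_5\}$, a contradiction. Hence $p_1,\dots,p_6,q_1,q_2,q_3$ is exactly $A\cap B$.

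For the forward direction, suppose the $p_i$ lie on an irreducible conic $C$, and let $m$ be the line through $q_1$ and $q_2$. Then $C\cup m$ is a cubic passing through the eight base points $p_1,\dots,p_6,q_1,q_2$, so by Cayley--Bacharach it also passes through the ninth, $q_3$. Since a line meets the irreducible conic $C$ only in its two points of $C$, each side-line $L_i$ meets $C$ exactly at two of the $p_i$; as $q_3\ne p_k$ for all $k$, we get $q_3\notin C$, whence $q_3\in m$ and the three diagonal points are collinear. For the converse, suppose $q_1,q_2,q_3$ lie on a line $m$, and let $C$ be the unique (irreducible, since no three $p_i$ are collinear) conic through $p_1,\dots,p_5$. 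Then $C\cup m$ is a cubic through the eight base points $p_1,\dots,p_5,q_1,q_2,q_3$, hence through the ninth, $p_6$. Finally $p_6\notin m$: otherwise $\overline{p_6 q_3}=m$, but $q_3\in L_6=\overline{p_1 p_6}$ forces $m=L_6$, and then $q_1\in L_1\cap L_6=\{p_1\}$ gives $p_1\in L_4$, contradicting the hypothesis. Therefore $p_6\in C$ and all six points are conconic.

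I expect the main obstacle to lie in two places. The first is the non-degeneracy bookkeeping: verifying that the nine points are genuinely distinct and that each $q_j$ avoids $C$ (respectively that $p_6$ avoids $m$). These are precisely the points at which the ``no three collinear'' hypothesis is consumed, and overlooking one of them manufactures spurious counterexamples. The second, and the real engine of the argument, is justifying the Cayley--Bacharach step itself, namely that a cubic through eight of the nine transverse intersection points of two cubics must pass through the ninth. I would establish the special case I need by analyzing the pencil $\lambda A+\mu B$ and showing that eight of the base points impose independent linear conditions on cubics, so that the cubics vanishing at them form exactly this pencil, every member of which also vanishes at the ninth point. Alternatively, and in the computational spirit of the present paper, one can sidestep Cayley--Bacharach entirely: choose coordinates, write the $6\times 6$ determinant expressing that $p_1,\dots,p_6$ are conconic and the $3\times 3$ determinant expressing that $q_1,q_2,q_3$ are collinear, and verify that the two vanish on the same locus once the degeneracy factors encoding the collinearity hypotheses are removed.
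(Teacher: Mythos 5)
Your proof is correct, but it takes a genuinely different route from the paper's. The paper (following Stefanovi\'c--Milo\v{s}evi\'c) argues computationally: it encodes the conconic condition as the vanishing of the $6\times 6$ determinant $F$ of Veronese coordinates of the $p_i$, encodes the collinearity of the $q_j$ as a $3\times 3$ determinant $G$, notes that both are polynomials of degree $12$ in the $18$ homogeneous coordinates, and verifies with a symbolic algebra package that $F=G$ identically. You instead give the classical synthetic argument: split the six sides into the two triangle-cubics $L_1L_3L_5$ and $L_2L_4L_6$, identify the nine points $p_1,\dots,p_6,q_1,q_2,q_3$ as their complete intersection, and apply the eight-implies-nine (Cayley--Bacharach) lemma to the reducible cubics $C\cup m$. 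Your incidence and non-degeneracy checks (distinctness of the nine points, $q_3\notin C$ via $C\cap L_3=\{p_3,p_4\}$, and $p_6\notin m$) are exactly where the ``no three collinear'' hypothesis is consumed, and they are handled correctly; the one step you rightly flag as load-bearing, the nine-point lemma, does go through by your pencil argument, since for the relevant eight points no five are collinear and not all eight lie on a conic, so they impose independent conditions on cubics. What your approach buys is a conceptual, computer-free proof. What the paper's approach buys is the polynomial identity $F=G$, which is strictly stronger than the stated biconditional: it requires no genericity hypothesis, immediately yields Pappus' theorem in the degenerate case where the points lie three apiece on two lines (as the paper observes), and serves as the template for the authors' later computational proof of the Richmond--Segre--Brown theorem for five lines in $\bbP^4$.
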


There are a number of proofs of this theorem; it may be that  one of  the most recent and modern proof (using symbolic algebra packages) was given in \cite{SM} and goes as follows.

Since no three of the points are collinear, the $p_i$'s lie on an irreducible conic
if and only if they lie on a conic, i.e., they satisfy a quadratic polynomial in the coordinates of $\bbP^2$.
If we choose such coordinates $[x_i:y_i:z_i]$ for $p_i$,
then a quadratic polynomial
$ax^2+by^2+cz^2+ dxy+exz+fyz$ has $p_i$ as a root if and only if
the row vector $v_i = (x_i^2,y_i^2,z_i^2,x_iy_i,x_iz_i,y_iz_i)$
is orthogonal to the column vector $(a,b,c,d,e,f)^\top$.
Hence to find such a quadratic polynomial defining the conic,
we must find such a nonzero column vector
that is orthogonal to the six row vectors $v_i$ given by each of the points.
This is possible if and only if the determinant
of the $6\times 6$ matrix whose rows are the $v_i$'s is zero.
This determinant is a polynomial $F$ in the $18$ variables which 
define the coordinates of the points.
It is quadratic in each of the sets of six variables for each point,
and has therefore total degree $12$.

On the other hand, each point $q_j$ has projective coordinates
given by quadratic polynomials in the six variables defining the two points $p_j$ and $p_{j+3}$, given by the $2\times 2$ minors of the corresponding $2\times 3$ matrix formed by the coordinates of $p_j$ and $p_{j+3}$.
These expressions for the coordinates of the $q_j$ are quartic polynomials in the $p$ coordinates.
Then the three $q_j$'s are collinear if and only if the determinant of the $3\times 3$ matrix whose rows are their coordinates is zero.
This is a polynomial $G$, again in the $18$ variables.
It is cubic in each of the three sets of $q$ variables,
hence of total degree $12$ in the $p$ variables also.

Any modern symbolic algebra package (we verified this with SageMath) 
can deduce that
$F=G$ as polynomials in the $18$ variables, 
which was first noted in \cite{SM}.

We observe that the identity $F=G$ proves a bit more, namely Pappus' Theorem,
which deals with the case when the points may lie three each on two lines.

\subsection{The case $n=3$}\label{RNCn=3}
Our analysis of the general case is illustrated by the case $n=3$,
which we now take up as an example.
Let $p_1,\ldots,p_7$ be seven points in general position in $\bbP^3$.
We note that if we project from any one of the points, say $p_i$,
the remaining six go to a set $S_i$ of six points in $\bbP^2$,
which are also in general position.

\begin{proposition}
The points $\{p_i\}_{1\leq i\leq 7}$ lie on a twisted cubic curve in $\bbP^3$
if and only if two of the sets $S_i$ and $S_j$,  with $1\leq i<j\leq 7$,  
lie on a conic in $\bbP^2$.
\end{proposition}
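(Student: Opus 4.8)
The plan is to prove both implications geometrically, using the standard fact (established for rational normal curves in the general case) that the projection of a twisted cubic from a point lying on it is an irreducible conic, whereas the projection from a point off the curve is a singular plane cubic. The \emph{only if} direction is then immediate: if all seven $p_i$ lie on a twisted cubic $C$, then for each $i$ the center $p_i$ lies on $C$, so projecting from $p_i$ carries $C$ to an irreducible conic and sends the remaining six points, all on $C$, to six points on that conic. Hence \emph{every} $S_i$ lies on a conic, and in particular any two of them do.

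For the \emph{if} direction, suppose after relabelling that $S_1$ and $S_2$ lie on conics $Q_1,Q_2\subset\bbP^2$. The first step is to transport each hypothesis back to $\bbP^3$: the assertion that the projection $S_i$ from $p_i$ lies on a conic $Q_i$ is equivalent to the existence of a quadric cone $K_i$ with vertex $p_i$, namely the cone over $Q_i$, passing through the other six points. Since the six projected points are in general position, $Q_i$ is irreducible, so $K_i$ is an irreducible rank-$3$ quadric with a single vertex; thus $K_1$ (vertex $p_1$) and $K_2$ (vertex $p_2$) are two distinct quadrics, each containing all seven points. I would then analyse the curve $K_1\cap K_2$. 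The line $\ell=\overline{p_1p_2}$ lies on both cones, because a line through the vertex of a cone and through a second point of the cone lies entirely on it; hence $\ell$ is a component of the degree-$4$ complete intersection, and writing $K_1\cap K_2=\ell\cup C'$ the residual $C'$ has degree $3$. As $p_3,\dots,p_7$ lie on both cones but not on $\ell$, they lie on $C'$.

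The crux, and the step I expect to be the main obstacle, is to show that $C'$ also passes through the two vertices $p_1$ and $p_2$; this is a local computation. Placing $p_1=[1:0:0:0]$, the cone $K_1$ has a double point at $p_1$ with tangent cone equal to itself, while $K_2$ is smooth there with some tangent plane $\Pi$. The tangent cone of $K_1\cap K_2$ at $p_1$ is therefore the pair of lines cut on the cone $K_1$ by $\Pi$, so $K_1\cap K_2$ has multiplicity $2$ at $p_1$. One of these two lines is the tangent to $\ell$, which lies in $\Pi$ since $\ell\subset K_2$; the other is a tangent direction of $C'$, whence $p_1\in C'$. The symmetric computation at $p_2$ gives $p_2\in C'$. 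Here general position is used to guarantee that the two tangent lines are distinct, equivalently that $\ell$ occurs with multiplicity one in $K_1\cap K_2$ so that $C'$ genuinely has degree $3$.

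It then remains only to identify $C'$. By the previous steps $C'$ is a degree-$3$ curve containing all seven points $p_1,\dots,p_7$. Since these span $\bbP^3$, the curve $C'$ is nondegenerate; and since no three of the points are collinear and no four coplanar, $C'$ can be neither a line together with a conic nor a union of three lines (such configurations would contain at most $2+3=5$, respectively $6$, of the seven points). Hence $C'$ is an irreducible twisted cubic through all seven points, which completes the proof. I would remark that the conclusion can alternatively be phrased via the net of quadrics through the unique twisted cubic on $p_2,\dots,p_7$, whose singular members are exactly the cones with vertex on the curve, forcing $p_1$ onto it; but the residual-curve argument above is self-contained and symmetric in $p_1,p_2$.
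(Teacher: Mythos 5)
Your proof is correct and follows essentially the same route as the paper: both directions rest on the two quadric cones over the conics with vertices at the two projection centers, whose intersection contains the line joining the vertices with residual curve the desired twisted cubic. You additionally supply details the paper leaves implicit (the multiplicity-two tangent cone computation showing the residual cubic passes through the two vertices, and the case analysis ruling out degenerate degree-$3$ curves), which strengthens rather than changes the argument.
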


\begin{proof}
If the points lie on a twisted cubic, then it is clear that all of the sets $S_i$
lie on the image of the cubic, which is a conic.

Conversely, suppose that $S_i$ and $S_j$ lie conics $C_i$ and $C_j$ respectively.
Consider the two cones $T_i$ and $T_j$ formed by the two joins of the conics and the corresponding projection points $p_i$ and $p_j$.
These are quadric surfaces that meet in the line $L$ joining $p_i$ to $p_j$.
The residual intersection is the desired twisted cubic. \end{proof}

To implement this criterion with $7$ specific points,
we make two explicit projections,
and then apply the two conditions of Pascal's Theorem,
which involve checking two determinants being zero.
This  of course gives two conditions on the coordinates of the seven points.

This is the correct number: the first six points lie on a unique twisted cubic,
and it is two conditions for the seventh point in $\bbP^3$ to lie on this curve.

We can make another count of parameters as follows.
The space of twisted cubics depend on $12$ parameters;
such a curve is parametrized by four cubic polynomials, which have then $16$ coefficients;
the automorphisms of the parametrization are the three dimensions of automorphisms of $\bbP^1$ and the scaling in $\bbP^3$.
Alternatively, they are all projectively equivalent in $\bbP^3$,
and the $15$ dimensions of automorphisms of $\bbP^3$ give the $12$ parameters,
since there is a $3$-dimensional stabilizer for any one of them.

Then seven points on these cubics depend on $12+7=19$ parameters.
But seven points in $\bbP^3$ depend on $21$ parameters, and $21-91=2$.

\subsection{The general case}\label{RNCgeneral}
Now we fix $d+4$ points $\{p_i\}_{1\leq i\leq d+4}$ in general position in $\bbP^d$.
Note first that if we choose a point $p_i$,
we may project the others to $\bbP^{d-1}$,
obtaining a set $S_i$ of $d+3$ points in general position in $\bbP^{d-1}$.

Note secondly that if we choose a set $R$ of $d-2$ of the points,
where $p_i \notin R$,
we may project (from $R$) the remaining six points to a set $T_R$ in $\bbP^2$.

\begin{theorem}
The set of $d+4$ points in $\bbP^d$ lie on a rational normal curve of degree $d$
if and only if
$S_i$ lies on a rational normal curve of degree $d-1$ in $\bbP^{d-1}$
and $T_R$ lies on a conic in $\bbP^2$.
\end{theorem}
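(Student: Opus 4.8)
The plan is to realise the desired curve as a residual intersection of two cones, one encoding each hypothesis. The forward implication is immediate from the behaviour of rational normal curves under projection: if the $p_i$ lie on a rational normal curve $C$ of degree $d$, then projecting $C$ from the point $p_i\in C$ yields a rational normal curve of degree $d-1$ through $S_i$, and projecting $C$ from the span $\langle R\rangle$ of the $d-2$ points of $R\subset C$ yields a curve of degree $d-(d-2)=2$, i.e.\ a conic, through $T_R$. This gives both conditions, so only the converse requires work.

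For the converse, I would first record that each hypothesis forces all $d+4$ points onto a cone. The assumption that $S_i$ lies on a rational normal curve $C_i$ of degree $d-1$ means all the points lie on the cone $X_i\subset\bbP^d$ of degree $d-1$ with vertex $p_i$ over $C_i$: the $d+3$ points other than $p_i$ because their projections lie on $C_i$, and $p_i$ because it is the vertex. The assumption that $T_R$ lies on a conic $Q$ means all the points lie on the quadric $Y_R$, the cone over $Q$ with vertex $\langle R\rangle\cong\bbP^{d-3}$: the six points not in $R$ because their projections lie on $Q$, and the points of $R$ because they lie in the vertex.

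Next I would relabel so that the test point $p_{d+4}$ is one of the six points not in $R$ and distinct from $p_i$, and let $C$ be the unique rational normal curve of degree $d$ through the remaining $d+3$ points (which then include $p_i$ and all of $R$). I claim $C\subseteq X_i\cap Y_R$: projecting $C$ from $p_i$ gives a rational normal curve of degree $d-1$ through the $d+2$ relevant points of $S_i$, hence equal to $C_i$ by uniqueness of the degree $d-1$ curve through $d+2$ general points, so $C\subseteq X_i$; likewise projecting $C$ from $\langle R\rangle$ gives a conic through five of the points of $T_R$, hence equal to $Q$, so $C\subseteq Y_R$. Granting general position enough to ensure $X_i\not\subseteq Y_R$, the intersection $X_i\cap Y_R$ is a curve of degree $2(d-1)$ containing $C$, so its residual has degree $d-2$. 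The $d-2$ distinct lines $\ell_r=\langle p_i,r\rangle$, $r\in R$, lie in $X_i$ (a cone with vertex $p_i$, and $r\in X_i$) and in $Y_R$ (a cone with $r$ in its vertex, and $p_i\in Y_R$), and they account for exactly the residual degree $d-2$. Since $p_{d+4}\in X_i\cap Y_R$ but cannot lie on any $\ell_r$ — as $p_{d+4},p_i,r$ collinear would violate general position — it must lie on $C$, which is the assertion.

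The hard part will be the residual analysis made precise. One must verify that the two cones meet properly (so that $X_i\not\subseteq Y_R$, which is where general position is genuinely used), that $C$ appears as a reduced component rather than being absorbed into excess intersection along the shared vertex data, and that the residual is reduced and is exactly the union of the $d-2$ rulings. The delicate geometry sits along the singular loci — the vertex $p_i$ of $X_i$, the vertex $\langle R\rangle$ of $Y_R$, and the lines $\ell_r$ through both — and I expect the cleanest way to control it is to pull everything back to the Hirzebruch surface $\bbF_{d-1}$ resolving $X_i$, where with $E$ the negative section and $f$ a fibre one identifies $C$ with the class $E+df$ and the residual with $E+(d-2)f$, the latter splitting off the rulings; the numerical checks there pin down that $C$ is a smooth rational normal curve of degree $d$ and that the decomposition is as claimed.
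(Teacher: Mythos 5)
Your proof is correct and follows essentially the same route as the paper: both realise the curve as the residual of the $d-2$ lines $\langle p_i,r\rangle$, $r\in R$, in the intersection of the degree-$(d-1)$ cone over $C_i$ with vertex $p_i$ and the quadric cone over $Q$ with vertex $\langle R\rangle$, via the degree count $2(d-1)-(d-2)=d$. Your additional step of identifying the residual with the known rational normal curve through $d+3$ of the points and then locating the last point on it is a genuine refinement rather than a different method: the paper simply asserts that the residual \emph{is} the desired rational normal curve, whereas your version sidesteps having to prove directly that the residual component is irreducible, nondegenerate, and passes through all $d+4$ points.
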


\begin{proof} 
If the $d+4$ points lie on a rational normal curve, the assertion is true in that direction.
Conversely, we have two cones at hand:
one is the cone $C_i$ over the rational normal curve in $\bbP^{d-1}$ with vertex $p_i$;
the other is a quadric cone $Q_R$ over the conic in $\bbP^2$ with vertex the span of $R$.

These two cones intersect in the set of $d-2$ lines 
joining $p_i$ to the points of $R$.
The residual intersection has degree $2(d-1) - (d-2) = d$,
and is the desired rational normal curve. \end{proof}

Applying this iteratively, we have:

\begin{corollary}\label{RNCcorollary}
Given $d+4$ points in general position in $\bbP^d$,
they lie on a rational normal curve of degree $d$
if and only if the following holds.
Fix $d-1$ of the points to form a subset $U$.
For each point $p$ in $U$, we may project from the span of the complement of $p$ in $U$ to a plane.
This gives six points in the plane, namely the projections of the complement of $U$
and the point $p$.
For each such point in $U$, these six points must lie on a conic.
\end{corollary}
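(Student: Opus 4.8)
\emph{The plan} is to argue by induction on $d$, using the Theorem above as the engine that drops the dimension by one and peels off a single conic condition at each step, and then to match the $d-1$ conditions produced by the iteration with the $d-1$ conditions indexed by the points of $U$. The base case $d=2$ is tautological: then $U$ is a single point $p$, the complement of $p$ in $U$ is empty, the indicated projection ``from the span of the empty set'' is the identity on $\bbP^2$, and the six points are all six of the given points, so the requirement is exactly that they lie on a conic, i.e.\ on a rational normal curve of degree $2$.

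For the inductive step, fix $U=\{p_1,\dots,p_{d-1}\}$ and let $V$ denote the complementary set of $5$ points. First I would apply the Theorem with projection point $p_1$ and with $R=U\setminus\{p_1\}=\{p_2,\dots,p_{d-1}\}$, a set of $d-2$ points disjoint from $p_1$. This replaces the assertion that the $d+4$ points lie on a rational normal curve of degree $d$ by the conjunction of two conditions: (i) the projected set $S_1\subset\bbP^{d-1}$ lies on a rational normal curve of degree $d-1$; and (ii) the six points $V\cup\{p_1\}$, projected from $\operatorname{span}(R)$, lie on a conic. Since $\operatorname{span}(R)=\operatorname{span}(U\setminus\{p_1\})$ and the complement of $U$ is $V$, condition (ii) is precisely the conic condition the statement attaches to the point $p_1\in U$.

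To handle (i), observe that $S_1$ is a set of $d+3=(d-1)+4$ points in general position in $\bbP^{d-1}$, so the inductive hypothesis applies; I would invoke it with the distinguished $(d-2)$-element subset taken to be $U'=\{\bar p_2,\dots,\bar p_{d-1}\}$, the images of $U\setminus\{p_1\}$. It then asserts that $S_1$ lies on a rational normal curve of degree $d-1$ if and only if, for each $k\in\{2,\dots,d-1\}$, the six points obtained by projecting $S_1$ from $\operatorname{span}(U'\setminus\{\bar p_k\})$ lie on a conic. The key geometric fact is the transitivity of linear projection: projecting $\bbP^d$ from $p_1$ and then projecting $\bbP^{d-1}$ from $\operatorname{span}(U'\setminus\{\bar p_k\})$ agrees with the single projection of $\bbP^d$ from $\operatorname{span}(\{p_1\}\cup(U\setminus\{p_1,p_k\}))=\operatorname{span}(U\setminus\{p_k\})$, and under this identification the six points carried along are the images of $V\cup\{p_k\}$. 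Hence the $k$-th condition for $S_1$ is exactly the conic condition the statement attaches to $p_k\in U$. Chaining the two equivalences, the $d+4$ points lie on a rational normal curve of degree $d$ if and only if the conic condition holds for every point of $U$, which completes the induction.

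The difficulty I anticipate is organizational rather than conceptual. One must verify, at each level of the recursion, that the composite of the successive point-projections is the single projection from $\operatorname{span}(U\setminus\{p_k\})$, that the six points transported to the plane are exactly the images of $V\cup\{p_k\}$, and that general position is preserved by every projection so that both the Theorem and the inductive hypothesis genuinely apply. The cleanest way to keep this straight is to track the cardinalities throughout — $|U|=d-1$, $|V|=5$, $|R|=d-2$, $|U'|=d-2$, and six points projected to $\bbP^2$ at each stage — and confirm that they remain consistent as $d$ decreases.
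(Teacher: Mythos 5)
Your proposal is correct and is essentially the paper's own argument: the paper derives the corollary simply by "applying the Theorem iteratively," and your induction on $d$ (peeling off the conic condition for $p_1$ via the Theorem, then invoking the inductive hypothesis on $S_1$ with the transitivity of linear projections identifying the remaining conditions) is exactly the careful formalization of that iteration. The bookkeeping points you flag — composites of projections, cardinalities, preservation of general position — are the right ones and all check out.
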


Note that in the Corollary, this gives exactly $d-1$ conditions on the points.

\begin{remark}
We see that to apply the above criterion
for $d+4$ points in $\bbP^d$ to lie on a rational normal curve,
we must ultimately make $d-1$ applications of Pascal's Theorem,
which, as we have observed in Section \ref{n=2section},
is one polynomial in the coordinates of the points.

Hence the variety of solutions here is at most codimension $d-1$ in the relevant parameters.
It is easily seen to be exactly $d-1$, by counting dimensions as in the $d=3$ case.

The family of rational normal curves in $\bbP^d$ depends on $d^2+2d-3$ parameters.
(They are all projectively equivalent, and the stabilizer is $3$-dimensional.)
Therefore the $d+4$-tuples of points on such a curve
depend on $d^2+3d+1$ parameters.

The parameters for $d+4$ points in general are $d^2+4d$;
the difference is $d-1$ as claimed.
\end{remark}

\begin{remark}
The criteria presented by Caminata and Schaffler in \cite[Theorem A]{CS}
basically reduces to asking that however one projects from
a subset of $d-2$ points to a plane, the remaining six points lie on a conic,
and this is then rephrased via Pascal's Theorem to a linear condition
on the derived intersection points.
Their argument is algebraic in nature, using the Grassmann-Cayley algebra;
the proof above seems more geometric.
\end{remark}

\subsection{Equations}
In \cite{CGMS}, the authors, among other things, determine equations
for ($d+4$)-tuples of points to lie 
on a rational normal curve of degree $d$ in $\bbP^d$.
The above considerations also enable us to give similar explicit equations for this problem.  We assume $d\geq 3$ here.

Given $d+4$ points in $\bbP^d$, using projective transformations
we may fix the first $d+2$ of them to be
the coordinate points and the point $[1:1:\cdots:1]$.
The coordinates of the remaining two points
give our parameter space for the problem.
We set these coordinates to be
$[a_0:\cdots:a_d]$ and $[b_0:\cdots:b_d]$.
We restrict ourselves to the open subset 
where the points are in general position.

We now apply Corollary \ref{RNCcorollary},
taking $U$ to be the first $d-1$ of the coordinate points:
\[
U = \{[1:0:\cdots:0],[0:1:0:\cdots:0],\ldots,[0:\cdots:0:1:0:0]\}
\]
Using  homogeneous  coordinates $[z_0:z_1:\cdots:z_d]$,
the projection from the complement of the $i$-point ($0 \leq i \leq d-2$ here)
is exactly the projection to the plane with coordinate $[z_i:z_{d-1}:z_d]$.

The six points of the construction in this plane are then given by
$[1:0:0]$ (the image  of the $i$-th point)
$[0:1:0]$, $[0:0:1]$, and $[1:1:1]$ (the projections of the other coordinate points not in $U$)
and $[a_i:a_{d-1}:a_d]$, $[b_i:b_{d-1}:b_d]$ (the projections of the two final points).
We then take the double Veronese images of these six points
to produce the following six points in $\bbP^5$:
\[
\begin{array}{cccccc}
1 & 0 & 0 & 0 & 0 & 0 \\
0 & 1 & 0 & 0 & 0 & 0 \\
0 & 0 & 1 & 0 & 0 & 0 \\
1 & 1 & 1 & 1 & 1 & 1 \\
a_i^2 & a_{d-1}^2 & a_d^2 & a_i a_{d-1} & a_i a_d & a_{d-1} a_d \\
b_i^2 & b_{d-1}^2 & b_d^2 & b_i b_{d-1} & b_i b_d & b_{d-1} b_d 
\end{array}
\]
We must impose that the determinant of the corresponding $6\times 6$ matrix is zero,
which simplifies to the equation
\[
a_i a_d b_{d-1}b_d - a_{d-1}a_d b_i b_d
-a_i a_{d-1} b_{d-1} b_d + a_{d-1}a_d b_i b_{d-1}
+a_i a_{d-1}b_i b_d -a_i a_db_i b_{d-1} = 0.
\]
These equations, as $i$ ranges from $0$ to $d-2$,
give the required equations describing the family of points lying on a rational normal curve, using this parametrization.
This holds for the points $[\underline{a}]$ and $[\underline{b}]$
satisfying some open conditions that ensure the general position of the $d+4$ points.
They are exactly the equations coming from the collinearity of the three
derived points in the plane, via the assertions of Pascal's Theorem.

We note that one may interpret the vanishing of the above determinant
differently.  It is clear that the determinant is equal to the lower right $3\times 3$ minor,
and the vanishing of this minor says exactly that the three points
\[
(a_i a_{d-1},b_i b_{d-1}), (a_i a_d,b_i b_d), (a_{d-1} a_d, b_{d-1} b_d)
\]
are collinear in the affine plane.

These consideration additionally show that our equations define the variety of points on a rational normal curve as a set-theoretic complete intersection.

\section{Quadrics in $\bbP^3$}
One may imagine that an extension of Pascal's Theorem
to quadrics in $\bbP^3$ would be available,
but there does not seem to be an exact analogue available.
In \cite[Note 32]{Chasles} there are presented several statements
for tetrahedra related to quadric surfaces,
but they do not seem to present the same type of phenomena
as a Pascal-type theorem.

\subsection{Ten points}
Quadrics in $\bbP^3$ form a $9$-dimensional complete linear system,
and there is in general a unique quadric through $9$ general points;
there is no quadric through $10$ general points.
The condition that there is a quadric through $10$ points
is easy to describe in terms of the coordinates of the points $\{p_i\}_{1\leq i\leq 10}$.

If $p_i = [a_{i0}:a_{i1}:a_{i2}:a_{i3}]$,
we re-embed these points into $\bbP^9$
via the double Veronese map,
obtaining $10$ points in $\bbP^9$.
They will be dependent in $\bbP^9$ if and only if there is a quadric
through the ten points in $\bbP^3$.
This gives a straightforward polynomial condition to check;
that condition is the determinant of the $10\times 10$ matrix
whose rows are the quadratic monomials in the $a_{ij}$'s,
and hence has degree $20$.

There does not seem to be an alternative construction,
a la Pascal's Theorem for conics in the plane,
to re-write this polynomial as arriving from a different geometric condition.

There are some elementary statements one can make,
taking into account the geometry of a quadric surface.
An example is the following.

\begin{proposition}
Let there be ten points in general position in $\bbP^3$.
Take the ten points, divide them into two sets of five points.
If the ten points lie on a quadric,
then there are twisted cubic curves in $\bbP^3$
passing through each of the two sets of five points,
that meet each other in five additional points.
The converse holds as well: if two such twisted cubics exist,
the ten points lie on a quadric.
\end{proposition}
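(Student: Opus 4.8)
The plan is to exploit the fact that a smooth quadric $Q\subset\bbP^3$ is $\bbP^1\times\bbP^1$ for the forward direction, and a dimension count in the $\bbP^9$ of all quadrics for the converse; throughout I treat $Q$ as smooth and irreducible, which holds since the ten points are in general position. For the forward direction, suppose the ten points lie on a quadric $Q$, so that $\Pic(Q)=\bbZ^2$ is generated by the two rulings, a curve of bidegree $(a,b)$ has degree $a+b$ in $\bbP^3$, and the intersection pairing is $(a,b)\cdot(c,d)=ad+bc$. The twisted cubics lying on $Q$ are exactly the curves of type $(2,1)$ or $(1,2)$, and the system $|(2,1)|$ has projective dimension $5$, so through five general points of $Q$ there passes a unique, irreducible $(2,1)$-cubic, and likewise a unique $(1,2)$-cubic. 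Given the splitting into $A$ and $B$, I would take $C_A$ of type $(2,1)$ through the five points of $A$ and $C_B$ of type $(1,2)$ through the five points of $B$. Being distinct irreducible curves of these types, they meet in $(2,1)\cdot(1,2)=5$ points, and I would check via general position that these five points are distinct from the original ten (a point of $A$ lies on $C_A$ but not on $C_B$, and symmetrically). This exhibits the two twisted cubics meeting in five additional points.

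For the converse, suppose $C_A$ is a twisted cubic through $A$, $C_B$ a twisted cubic through $B$, and that they meet in five points $r_1,\dots,r_5$. The key fact is that each twisted cubic lies on a net of quadrics: one computes $h^0(\mathcal{I}_{C_A}(2))=h^0(\mathcal{I}_{C_B}(2))=3$, so the quadrics containing $C_A$ form a plane $N_A\cong\bbP^2$ inside the $\bbP^9$ of all quadrics, and likewise $N_B\cong\bbP^2$. Since the $r_i$ lie on $C_A$, every quadric of $N_A$ passes through them, and the same holds for $N_B$; hence both $N_A$ and $N_B$ lie inside the linear system $V\subset\bbP^9$ of quadrics through $r_1,\dots,r_5$. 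The main step is to show that the $r_i$ impose five independent conditions on quadrics, so that $V\cong\bbP^4$. I would prove this by restricting quadrics to $C_A\cong\bbP^1$: the map $H^0(\mathcal{O}_{\bbP^3}(2))\to H^0(\mathcal{O}_{C_A}(2))=H^0(\mathcal{O}_{\bbP^1}(6))$ is surjective with kernel $N_A$, and five distinct points on $\bbP^1$ impose independent conditions on $|\mathcal{O}(6)|$, hence five independent conditions on the quadrics. With $V\cong\bbP^4$ in hand, the two planes $N_A,N_B\subset\bbP^4$ must meet, since $\dim N_A+\dim N_B-\dim V=2+2-4=0$. Any point of $N_A\cap N_B$ is a quadric $Q$ containing both $C_A$ and $C_B$, hence containing $A\cup B$, so all ten points lie on $Q$.

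The routine part is the intersection-theoretic bookkeeping on the quadric; the delicate point is the independence of the conditions imposed by the five intersection points $r_1,\dots,r_5$, since these are forced upon us rather than chosen freely. The restriction-to-$C_A$ argument handles this provided the $r_i$ are distinct, which I expect to follow from the general position hypotheses. One should also confirm that the resulting quadric is smooth — equivalently that $C_A$ and $C_B$ sit on it with opposite bidegrees $(2,1)$ and $(1,2)$ — to recover the full geometric picture, although for the stated conclusion the mere existence of a containing quadric already suffices.
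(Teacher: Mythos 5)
Your proof is correct, and it is worth noting how it relates to the paper's own (very terse) justification. The paper disposes of the whole proposition in one sentence: view the quadric as $\bbP^1\times\bbP^1$ and observe that curves of bidegree $(1,2)$ and $(2,1)$ pass through five general points and meet in five points. That sentence really only covers the forward direction, which you prove in exactly the same way (correctly insisting that the two cubics be taken with \emph{opposite} bidegrees, since $(1,2)\cdot(1,2)=4$ would give only four intersection points). Where you genuinely add something is the converse, which the paper leaves as ``easily seen'': your argument via the two nets $N_A,N_B\cong\bbP^2$ of quadrics through the twisted cubics, the observation that five distinct points of $C_A\cong\bbP^1$ impose independent conditions on $H^0(\mathcal{O}_{\bbP^1}(6))$ so that the quadrics through $r_1,\dots,r_5$ form a $\bbP^4$, and the forced intersection of two planes in $\bbP^4$, is a complete and clean proof of the converse that does not presuppose the ambient quadric. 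The dimension count also makes transparent why exactly five intersection points are needed: with only four, $V$ would be a $\bbP^5$ and the two nets could be disjoint. Your closing caveats (distinctness of the $r_i$, possible degeneracy of the resulting quadric) are the right ones to flag; for the statement as given, the distinctness is part of the hypothesis (``five additional points'') and the existence of \emph{some} containing quadric is all that is claimed, so your argument suffices as written.
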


The proof is easily seen by considering the quadric as $\bbP^1\times\bbP^1$,
and noticing that there are curves of bidegree $(1,2)$ and $(2,1)$
on the quadric through general five points; they will meet in five points.

One may extend the set of conditions and include
requiring that the quadric contains various low degree curves.
We take up several examples of these in the following subsections.

\subsection{Seven Points and One Line}
It is three conditions for a quadric to contain a line,
and therefore if we fix seven general points and one general line,
there should not be a quadric containing them;
the condition that there is one should be one condition
on the parameters describing the points and the line.

We again have a geometric condition in the spirit of the statement for ten points,
but it is not exactly of Pascal type.

\begin{proposition}
Let there be given a general line and seven general points in $\bbP^3$.
If this collection lies on a quadric surface,
there is a rational quartic curve passing through the points
and meeting the line  in three points.  
The converse is also true.
\end{proposition}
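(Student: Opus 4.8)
The plan is to mimic the proof strategy of the preceding Proposition on ten points, realizing the quadric as $\bbP^1 \times \bbP^1$ and translating the configuration into the language of bidegrees. Since the seven points and the line are in general position, whenever they lie on a quadric $Q$ that quadric is smooth, so $Q \cong \bbP^1 \times \bbP^1$, and the line $\ell$, being a line contained in a smooth quadric, is necessarily one of the two rulings; say it has bidegree $(1,0)$. A curve of bidegree $(a,b)$ on $Q$ has degree $a+b$ as a space curve and arithmetic genus $(a-1)(b-1)$, so the rational quartics lying on $Q$ are exactly the curves of bidegree $(1,3)$ (and $(3,1)$). I would use the $(1,3)$ system: it is a linear system of projective dimension $(1+1)(3+1)-1 = 7$, hence contains a curve $C$ through any seven general points, and such a $C$ meets the ruling $\ell$ in $(1,3)\cdot(1,0)=3$ points. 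This produces the desired rational quartic and proves the forward implication.

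For the converse I would argue that a rational quartic essentially determines its quadric. A degree-four rational curve $C$ imposes at most nine conditions on quadrics, since the restriction $H^0(\calO_{\bbP^3}(2)) \to H^0(\calO_C(2)) = H^0(\calO_{\bbP^1}(8))$ lands in a $9$-dimensional space while its source is $10$-dimensional; hence $C$ lies on at least one quadric $Q$, and for a general such $C$ exactly one. This $Q$ contains the seven points, as they lie on $C$. It remains to see that $Q$ contains $\ell$: the three points in which $C$ meets $\ell$ lie on $Q$, and a line meeting a quadric in three points (a quadratic form vanishing at three points of a line vanishes on it) must be contained in it. Hence the seven points and $\ell$ all lie on the single quadric $Q$.

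The main obstacle I expect is the bookkeeping of genericity rather than any single deep step. In the forward direction one must verify that the $(1,3)$ curve through seven general points is irreducible and genuinely of degree four, rather than a degenerate member of the system splitting off a ruling; in the converse one must ensure that the quadric through $C$ is unique and smooth, so that the intersection numbers used above are the correct ones and the three points of $C \cap \ell$ are distinct. As a consistency check, the parameter count agrees: configurations consisting of a quadric ($9$ parameters), a ruling on it ($1$), and seven points on it ($14$) total $24$, one less than the $4 + 21 = 25$ parameters for an unconstrained line-and-seven-points configuration, confirming that lying on a common quadric is a single condition, exactly as anticipated.
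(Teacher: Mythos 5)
Your proposal is correct and follows the same route as the paper, which simply observes that the quartic is the bidegree $(1,3)$ curve on the quadric $\bbP^1\times\bbP^1$ through the seven points, meeting the ruling $\ell$ in $(1,3)\cdot(1,0)=3$ points. Your converse (every rational quartic lies on a quadric, which then contains the seven points and, since it meets $\ell$ in three points, contains $\ell$ itself) supplies the detail the paper leaves implicit, and your genericity caveats and parameter count are consistent with the paper's own count of ``one condition.''
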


Again the proof is immediate upon considering a curve of bidegree $(1,3)$
on the quadric, which will be the rational quartic.

\subsection{Four Points and Two Lines}
Given four points and two lines in general position in $\bbP^3$,
we again should have one condition that they lie on a quadric.
Call the points $p_0,\ldots,p_3$.
There is a unique line $L_i$ containing $p_i$ and meeting the two given lines,
for $1 \leq i \leq 3$.
There is a quadric through the four given points and the two given lines
if and only if
there is a quadric through $p_0$ and $L_1,L_2,L_3$.
Hence this case reduces to the next case.

\subsection{One Point and Three Lines}
In this case there is an elementary Pascal-type condition to state.

\begin{proposition}
Given a general point and general three lines in $\bbP^3$,
they lie on a quadric surface
if and only if
there is a line through the point
meeting each of the three lines.
Equivalently, this happens if the projection from the point
maps the three lines to three concurrent lines in the plane.
\end{proposition}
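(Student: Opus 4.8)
The plan is to exploit the classical fact that three general lines in $\bbP^3$ lie on a unique smooth quadric, together with the transversality relationship between the two rulings of such a quadric. First I would note that three general lines $\ell_1,\ell_2,\ell_3$ are pairwise skew, and recall that three pairwise skew lines determine a unique smooth quadric surface $Q$ on which they lie, forming part of one of the two rulings (this is exactly the classical construction of the doubly ruled quadric; it also matches the condition count, since containing three lines imposes $3\cdot 3 = 9$ conditions on the $9$-dimensional system of quadrics). Identifying $Q$ with $\bbP^1\times\bbP^1$ with the $\ell_i$ lying in the first ruling, the lines of the second ruling are precisely the lines meeting each of $\ell_1,\ell_2,\ell_3$ in one point.

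With this in hand the forward direction is short. If a quadric $Q'$ contains $p,\ell_1,\ell_2,\ell_3$, then $Q'$ contains the three skew lines, so $Q'=Q$ by uniqueness; hence $p\in Q$. Since $p$ lies on $Q$, it lies on a unique line $m$ of the second ruling, and $m$ meets all three of the $\ell_i$. This line through $p$ is the required common transversal.

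For the converse, I would suppose a line $m$ through $p$ meets each of $\ell_1,\ell_2,\ell_3$. The key step is to argue that $m$ must lie on $Q$: since $m$ meets each $\ell_i\subset Q$, it meets $Q$ in three points, and in general position these three points are distinct; a line meeting a quadric in three distinct points is forced to be contained in it. Therefore $m\subset Q$, and in particular $p\in m\subset Q$, so $Q$ itself is a quadric through $p$ and the three lines.

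Finally, the equivalence with the projection statement is immediate: projecting from $p$ sends each $\ell_i$ to a line $\bar\ell_i$ in the plane, a common transversal through $p$ maps to a common point of the three $\bar\ell_i$, and conversely a point at which the three $\bar\ell_i$ concur is the image of a transversal through $p$. The main obstacle is verifying the general-position hypotheses, namely that the three lines are pairwise skew so that $Q$ is smooth with two genuine rulings, and that a transversal meets the three lines in three distinct points so that the ``three points forces containment'' argument applies; both hold on the open set of configurations in general position.
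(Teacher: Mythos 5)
Your proof is correct and follows essentially the same route as the paper's one-line argument: both rest on the unique doubly ruled quadric through three pairwise skew lines, with the common transversal being the line of the opposite ruling through the point. Your write-up merely makes explicit the uniqueness of that quadric and the Bezout step for the converse (where, in fact, the three intersection points of $m$ with the $\ell_i$ are automatically distinct, since a coincidence would force two of the skew lines to meet).
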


Again, this is elementary: the three lines (if on a quadric) come from one of the two rulings, and the line through the point is the unique line from the other ruling through the point.

In this case we also offer an algebraic analysis.
Let us choose coordinates
so that the given point $P$ is at $[1:0:0:0]$
and the three given lines each contain the other three coordinate points:
$L_1$ contains $[0:1:0:0]$,
$L_2$ contains $[0:0:1:0]$, and
$L_3$ contains $[0:0:0:1]$.
Then each line is determined by an additional general point
$R_i = [a_{i0}:a_{i1}:a_{i2}:a_{i3}]$ on $L_i$.
We seek the conditions on the $a_{ij}$'s so that $P$ and the $L_i$
lie on a quadric.

That the quadric contains the four coordinate points
means that the quadratic equation has no square terms,
and we are left with the remaining six monomial terms.
The conditions are that the three points $R_i$ are zeroes of the polynomial,
and that the tangent plane to the quadric at the coordinate point
passes through $R_i$ as well.

The first three conditions are quadratic in the $a_{ij}$'s,
and the second three are linear.
The corresponding matrix condition can be easily seen to be that the determinant
of the following $6\times 6$ matrix must be zero:
\[
\begin{pmatrix}
a_{10}a_{11} & a_{10}a_{12} & a_{10}a_{13} & a_{11}a_{12} & a_{11}a_{13} & a_{12}a_{13} \\
a_{20}a_{21} & a_{20}a_{22} & a_{20}a_{23} & a_{21}a_{22} & a_{21}a_{23} & a_{22}a_{23} \\
a_{30}a_{31} & a_{30}a_{32} & a_{30}a_{33} & a_{31}a_{32} & a_{31}a_{33} & a_{32}a_{33} \\
a_{10} & 0 & 0 & a_{12} & a_{13} & 0 \\
0 & a_{20} & 0 & a_{21} & 0 & a_{23} \\
0 & 0 & a_{30} & 0 & a_{31} & a_{32}
\end{pmatrix}
\]
The determinant of this matrix has degree $9$, tridegree $3$ in each of the three sets of variables for the three points $R_i$.
This determinant is:
\[
(a_{21}a_{30} - a_{20}a_{31}) (a_{12}a_{30} - a_{10}a_{32})(-a_{13}a_{20} + a_{10}a_{23})(a_{12}a_{23}a_{31} - a_{13}a_{21}a_{32}).
\]
The first three terms express degenerate cases in which two of the lines meet, and are therefore coplanar.  This geometric condition obviously guarantees the existence of the quadric as the plane containing the two lines that meet, together with the plane through the third line and $P$.  Factoring these out, we obtain the relatively simple algebraic condition that
\[
a_{12}a_{23}a_{31} - a_{13}a_{21}a_{32} = 0
\]
and this expresses exactly the condition that the projection of the three lines from $P$ to the plane $z_0 = 0$ are concurrent.

\subsection{One Conic and Five Points}
In this case we again have a geometric condition.
\begin{proposition}
Let there be given a general conic and five general points in $\bbP^3$.
Then they lie on a quadric surface
if and only if
there is a twisted cubic
passing through the five points
and meeting the conic  at three points.  
\end{proposition}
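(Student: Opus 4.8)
The plan is to prove both implications by reading off the curves through their bidegrees on a smooth quadric and, for the converse, by a short cohomological existence argument.

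For the forward direction, suppose the conic $C$ and the five points $p_1,\dots,p_5$ all lie on a quadric $Q$. For a general such configuration $Q$ is smooth, so I identify $Q$ with $\bbP^1\times\bbP^1$ and record divisor classes by their bidegree $(a,b)$. The conic $C$, being a plane section of $Q$, has class $(1,1)$. The linear system $|(1,2)|$ has projective dimension $5$, so there is a curve $D$ of class $(1,2)$ through the five general points; for general points this $D$ is unique, irreducible, and nondegenerate, hence a twisted cubic in $\bbP^3$. Since $(1,2)\cdot(1,1)=3$, the curve $D$ meets $C$ in three points, which produces the asserted twisted cubic.

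For the converse, suppose a twisted cubic $D$ passes through the five points and meets $C$ in three points $Z=\{q_1,q_2,q_3\}$. I would show directly that some quadric contains $C\cup D$; such a quadric then contains $C$ together with the five points lying on $D$, which is exactly what is required. To this end I use the exact sequence
\[
0 \to \calO_{C\cup D} \to \calO_C \oplus \calO_D \to \calO_Z \to 0,
\]
twisted by $\calO(2)$. Here $\calO_C(2)\cong \calO_{\bbP^1}(4)$ and $\calO_D(2)\cong\calO_{\bbP^1}(6)$, so $h^0(\calO_C(2))=5$ and $h^0(\calO_D(2))=7$, while $h^0(\calO_Z(2))=3$. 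Because $Z\subset C$ and quartic forms on $\bbP^1$ separate three points, the restriction $H^0(\calO_C(2))\to H^0(\calO_Z(2))$ is already surjective, and therefore so is the middle map above; this gives $h^0(\calO_{C\cup D}(2))=5+7-3=9$. The restriction $H^0(\bbP^3,\calO(2)) \to H^0(\calO_{C\cup D}(2))$ then lands in a $9$-dimensional space, so its kernel $h^0(\calI_{C\cup D}(2))$ has dimension at least $10-9=1$, yielding the desired quadric. This is consistent with the earlier count that the quadrics through $C$ form a $\bbP^4$, on which the five points impose the single expected condition.

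I expect the main obstacle to be the handling of degenerate configurations rather than either dimension count. In the forward direction I must justify that $Q$ is smooth for a general conic and five general points on it, and that the resulting $(1,2)$ curve is genuinely irreducible and nondegenerate — ruling out splittings such as $(1,1)+(0,1)$ — so that it is an honest twisted cubic meeting $C$ in three distinct points. In the converse the count is robust, since the surjectivity onto $\calO_Z(2)$ comes for free from $Z\subset C$; nevertheless I should still confirm that $D$ is smooth rational, so that $\calO_D(2)\cong\calO_{\bbP^1}(6)$, and that $C$ and $D$ meet in an honest length-three scheme, so that the exact sequence above is valid.
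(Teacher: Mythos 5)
Your proof is correct, and it is worth comparing the two halves separately. For the forward direction you do exactly what the paper does: identify the quadric with $\bbP^1\times\bbP^1$, note that the conic has class $(1,1)$, that $|(1,2)|$ has projective dimension $5$ and so contains a curve through five general points, and that $(1,2)\cdot(1,1)=3$; the paper's one-sentence proof is precisely this bidegree observation (it also remarks that the $(2,1)$ class gives a second such cubic, so the twisted cubic is not unique). For the converse, however, the paper offers nothing beyond the same bidegree remark, which is circular in that direction: one cannot read off bidegrees on $\bbP^1\times\bbP^1$ before knowing the quadric exists. Your Mayer--Vietoris argument
\[
0 \to \calO_{C\cup D}(2) \to \calO_C(2) \oplus \calO_D(2) \to \calO_Z(2) \to 0
\]
fills this gap honestly: the surjectivity of $H^0(\calO_C(2))\to H^0(\calO_Z(2))$ follows from $h^1(\calO_{\bbP^1}(4-3))=0$, giving $h^0(\calO_{C\cup D}(2))=9<10$ and hence a nonzero quadric through $C\cup D$; since the twisted cubic is irreducible and nondegenerate, this quadric is automatically irreducible. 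What your approach buys is a complete and noncircular proof of the ``if'' direction; what it costs is a little cohomological machinery that the paper's informal style avoids. Your listed caveats (smoothness of $Q$ in the forward direction, irreducibility of the $(1,2)$ curve, and the length-three intersection scheme) are the right ones and are all satisfied for general configurations, as you say.
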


The conic is of bidegree $(1,1)$ on the quadric,
and the required twisted cubic is of bidegree $(1,2)$.
We note that the twisted cubic is not unique: there is a second one,
having bidegree $(2,1)$.

\subsection{Twisted Cubic and Three Points}
In this case there is a Pascal-type proposition to state, which again follows from relatively elementary considerations.

\begin{proposition}
Given a twisted cubic and three general points in $\bbP^3$,
they lie on a quadric surface
if and only if
the twisted cubic meets the plane in three additional points
which, together with the three given points,
lie on a conic in that plane.
\end{proposition}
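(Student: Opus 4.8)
Let $\Pi$ be the plane spanned by the three given points $p_1,p_2,p_3$, and let $r_1,r_2,r_3$ be the three points in which the twisted cubic $C$ meets $\Pi$. Since a twisted cubic has no trisecant lines---any three of its points span a plane, as one sees from the Vandermonde minors of the standard parametrization---the points $r_1,r_2,r_3$ are distinct and non-collinear in $\Pi$. One implication is immediate. If $C$ and the $p_i$ all lie on a quadric $Q$, then $Q$ cannot contain $\Pi$ (it contains the nondegenerate curve $C$), so the plane section $Q\cap\Pi$ is a conic of $\Pi$. This conic contains $p_1,p_2,p_3$, which lie on both $Q$ and $\Pi$, and it contains $r_1,r_2,r_3$, which lie on $C\subset Q$ and on $\Pi$; hence all six points lie on the conic $Q\cap\Pi$.

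For the converse the plan is to exploit the net of quadrics through $C$. A twisted cubic is the base locus of a net of quadrics, so the quadrics containing $C$ form a $3$-dimensional vector space $V$. Restriction to $\Pi$ sends each such quadric to a conic of $\Pi$ through $r_1,r_2,r_3$; as these three points are non-collinear, the conics through them likewise form a $3$-dimensional vector space $W$. I would show that the restriction map $\rho\colon V\to W$ is an isomorphism. Granting this, if $D\subset\Pi$ is a conic through all six points, then $D\in W$, and the quadric $Q=\rho^{-1}(D)$ contains $C$ and satisfies $Q\cap\Pi=D$; since $p_1,p_2,p_3\in D\subset Q$, this $Q$ is the desired quadric.

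The key step, which I expect to be the only real obstacle, is the injectivity of $\rho$; surjectivity is then automatic because $\dim V=\dim W=3$. A nonzero element of $\ker\rho$ is a quadric through $C$ whose restriction to $\Pi$ vanishes identically, i.e.\ a quadric containing $\Pi$. Such a quadric is reducible of the form $\Pi\cup\Pi'$, and to contain the irreducible curve $C$ it would have to contain $C$ inside $\Pi$ or inside $\Pi'$; this is impossible since $C$ spans $\bbP^3$. Hence $\ker\rho=0$ and $\rho$ is an isomorphism. This also matches the expected enumerative count: imposing the three points $p_i$ cuts out three linear conditions on the $3$-dimensional space $V$, and a nonzero common quadric exists exactly when the resulting $3\times 3$ determinant vanishes---a single condition, in agreement with the single condition that the six coplanar points lie on a conic.
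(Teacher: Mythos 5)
Your proof is correct, but it takes a genuinely different route from the paper's. The paper disposes of the proposition in one line by working on the quadric itself viewed as $\bbP^1\times\bbP^1$: the twisted cubic has bidegree $(1,2)$ and the plane section (a conic) has bidegree $(1,1)$, so they meet in $1\cdot 1+2\cdot 1=3$ points, which together with the three given points lie on that plane section; the converse is left as an implicit consequence of the same picture. You instead argue intrinsically in $\bbP^3$, without assuming the quadric exists: you use the fact that the quadrics through a twisted cubic $C$ form a net $V$, and you show that restriction to the plane $\Pi$ gives an isomorphism from $V$ onto the $3$-dimensional space $W$ of conics in $\Pi$ through the three residual points $r_1,r_2,r_3$ (injectivity because a quadric containing $\Pi$ splits as a union of two planes and so cannot contain the nondegenerate irreducible $C$; surjectivity by dimension count). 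This is the more substantive contribution: the ``only if'' direction is easy either way, but your restriction-isomorphism argument is what actually produces the quadric in the ``if'' direction, which the paper's bidegree remark does not address explicitly. The paper's approach buys brevity and consistency with the $\bbP^1\times\bbP^1$ theme running through that section; yours buys a complete and self-contained proof of the harder implication, at the modest cost of invoking the classical facts that $C$ is cut out by a net of quadrics and has no trisecant lines (the latter guaranteeing $r_1,r_2,r_3$ span $\Pi$; their distinctness is a generality assumption, harmless here).
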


In this case the twisted cubic is of bidegree $(1,2)$,
the conic has bidegree $(1,1)$, and the statement follows directly
from the geometry of $\bbP^1\times\bbP^1$.

\section{The Richmond-Segre-Brown Theorem for lines in $\bbP^4$}
There is a remarkable extension of Pascal's Theorem to quadrics in $\bbP^4$,
which is the following construction.

For a quadric in $\bbP^4$ to contain a line is $3$ conditions.
The space of quadrics in $\bbP^4$ is of dimension $14$.
Therefore it is expected that there are no quadrics containing five general lines,
and this is indeed the case.

The Grassmann  variety  of lines in $\bbP^4$ is $6$-dimensional,
so the number of parameters for quintuples of lines is $30$.
Let $\calI$ be the incidence correspondence
consisting of pairs $(Q,C)$ where $Q$ is a quadric, $C$ is a collection of five skew lines, and $C \subset Q$.
The family of lines on a quadric threefold has dimension three,
so that the fiber of $\calI$ to the projective space of quadrics in $\bbP^4$
has dimension $15$.
Hence $\calI$ has dimension $29$.

Now given five general lines on a general quadric,
we claim that there is only that one quadric containing them.
The reason is that the quadrics containing them is a linear system,
and if it is positive dimensional,
the intersection of the given quadric
with a general other member of the system
would be a Del Pezzo surface of degree $4$ in $\bbP^4$
containing the five lines.
However on any such Del Pezzo, there is a line meeting all five skew lines.
This is not possible by the generality of the five lines on the quadric.

Hence the second projection from $\calI$ to the space of five general lines is birational, and so the image has dimension $29$ as well, hence  it has  codimension one.

Therefore it is one condition for a quintuple of lines to lie on a quadric.

What is that condition?  This is the content of a remarkable theorem first addressed by Richmond at the end of the 19th century \cite{R}.  It was rediscovered by B. Segre in a paper in 1945 who does not reference Richmond; he may not have been aware.  In the next year Brown \cite{B} gave a different proof, and Brown references both Richmond and Segre.

It is convenient to make a definition:

\begin{definition}
Fix an ordered set of five lines $L_i$, $1\leq i \leq 5$.
For each $i$, consider the point $R_i$ of intersection of $L_i$
with the span of $L_{i-1}$ and $L_{i+1}$
(indices taken modulo five).
We say that the quintuple of lines is \emph{of RSB type}
(after Richmond, Segre, and Brown)
if those five points $\{R_i\}_{1\leq i\leq 5}$ are dependent in $\bbP^4$.
\end{definition}

The theorem is the following.

\begin{theorem}
Five lines in general position in $\bbP^4$
lie on a quadric
if and only if they are of RSB type.
\end{theorem}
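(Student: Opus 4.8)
The plan is to mirror the computer-algebra strategy used above for Pascal's Theorem, where the conic-existence determinant $F$ was shown to equal the collinearity determinant $G$: I will express each side of the equivalence as the vanishing of an explicit determinant in the coordinates of the five lines, and then verify that the two determinants agree, up to a nonzero scalar, as polynomials. A single such identity then proves both directions at once, exactly as $F=G$ did for Pascal.

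First I would set up coordinates. Parametrize each line $L_i$ by an ordered pair of points $P_i,Q_i\in\bbP^4$, so that a variable point of $L_i$ is $sP_i+tQ_i$, and use the $24$-dimensional group of projective transformations to place six of the ten points at a standard reference frame (the five coordinate points together with $[1:1:1:1:1]$), leaving the coordinates of the remaining points as free parameters. All the conditions below are homogeneous in each $P_i,Q_i$ and invariant under re-choosing the two marked points on each line, so this normalization is harmless and keeps the computation in a manageable number of variables.

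Next, the two determinants. A quadric is a quadratic form $F$ on $\bbC^5$, lying in the $15$-dimensional space of quadratic forms; containing $L_i$ amounts to exactly the three linear conditions $F(P_i)=0$, $F(Q_i)=0$, and $B_F(P_i,Q_i)=0$, where $B_F$ is the associated bilinear form, since these are the coefficients of the binary quadratic $F(sP_i+tQ_i)$. Assembling these $5\times 3=15$ rows against the $15$ monomial coefficients of $F$ gives a $15\times 15$ matrix $M$, and on the general-position locus the five lines lie on a quadric if and only if $\det M=0$; call this polynomial $D_Q$. For the RSB side I would compute each derived point explicitly: the hyperplane $\langle L_{i-1},L_{i+1}\rangle$ is cut out by a linear form $\ell_i$ whose coefficients are the $4\times 4$ minors of the matrix with rows $P_{i-1},Q_{i-1},P_{i+1},Q_{i+1}$, whence $R_i=\ell_i(Q_i)\,P_i-\ell_i(P_i)\,Q_i$. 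The RSB condition is then the vanishing of the $5\times 5$ determinant $D_{RSB}$ whose rows are the $R_i$.

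The crux is to verify, with a symbolic algebra package, the polynomial identity $D_Q=c\,D_{RSB}$ for a nonzero constant $c$. A strong structural check, and the reason the identity is plausible, is that both determinants have the same multidegree: tracking the degree in each point shows that $D_Q$ and $D_{RSB}$ are each of degree $3$ in every one of the ten points, of total degree $30$. One then confirms that neither polynomial carries a spurious codimension-one factor; the natural degeneracies, such as two of the lines meeting, are codimension two and so merely sit inside the hypersurface rather than splitting off as factors, so that both $D_Q$ and $D_{RSB}$ are, up to scalar, the single irreducible polynomial cutting out the quadric-existence hypersurface identified above. I expect the main obstacle to be purely computational: $D_Q$ is a $15\times 15$ determinant with quadratic entries and $D_{RSB}$ a $5\times 5$ determinant with sextic entries, so the real work lies in choosing coordinates that reduce the variable count enough for the symbolic comparison to terminate, while checking that the open general-position locus on which $\det M=0$ detects a genuine quadric, and on which the $R_i$ are well defined, is exactly the locus where the equivalence is asserted.
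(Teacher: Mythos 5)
Your proposal is correct and follows essentially the same route as the paper: both conditions are encoded as determinants in the coordinates of marked points $P_i,Q_i$ on the lines, and the theorem is reduced to a computer-algebra verification that the two polynomials agree up to a nonzero scalar (the paper normalizes the $P_i$ to the coordinate points, which collapses your $15\times15$ matrix to a $10\times10$ one and makes both determinants have degree $15$ rather than $30$, but this is only a cosmetic difference). Your formula $R_i=\ell_i(Q_i)P_i-\ell_i(P_i)Q_i$ and your multidegree bookkeeping are consistent with the paper's computation.
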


The proofs given in the last centuries by Richmond, Segre, and Brown
are a tour-de-force of four-dimensional synthetic geometry,
involving rather complicated constructions.

Nowadays a computer algebra proof is available,
once we formulate the problem in explicit linear algebra terms.
We will explain now the approach.

We choose two points $P_i$ and $Q_i$ on the line $L_i$.
Using projective transformations we may assume that the $P_i$
are the five coordinates points of $\bbP^4$.

We describe the conditions on the quadric to contain the five lines as follows.
We write the general quadric as the zeroes of a  homogeneous quadratic polynomial   in the
homogeneous coordinate $[z_0:\cdots:z_4]$.
That the quadric contains the coordinate points $P_i$
is equivalent to having no pure square terms in the quadratic polynomial.
This leaves $10$ terms with the monomials $z_iz_j$, $i \neq j$.

Now imposing that the quadric contain the $Q_i$'s
give five linear conditions on the coefficients.
These linear conditions are quadratic in the coordinates of the $Q_i$'s.

Finally imposing that the quadric contains the line $L_i$,
is now equivalent to imposing that the tangent hyperplane to the quadric at $P_i$ contains $Q_i$.
This is five more linear conditions on the coefficients, which are each linear in the coordinates of the $Q_i$'s.

Hence the existence of the quadric amounts to the vanishing of the
determinant $F$ of the $10\times 10$ matrix described by the above ten conditions on the ten coeffients.  This matrix has five rows with entries of degree two in the coordinates and five rows with linear entries in the coordinates. Hence the degree of the polynomial $F$ in these coordinates is $15$.

Let us now describe the condition for the five lines to be of RSB type.
The calculation of the points $R_i$ can be made explicitly as follows.
The hyperplane spanned by $L_{i-1}$ and $L_{i+1}$
is exactly the hyperplane spanned by the four points $P_{i-1}$, $Q_{i-1}$, $P_{i+1}$, $Q_{i+1}$.
Hence the coefficients of this plane are given the $4\times 4$ minors
of the $4\times 5$ matrix whose rows are the coordinates of those four points.
Each of these minors is a quadratic polynomial in the coordinates of the $Q_i$'s.

The intersection of this hyperplane with the line $L_i$
is obtained by taking the parametric equation for $L_i$
and solving for the intersection point;
this gives a point $R_i$ whose coordinate are now cubic polynomials in the
coordinates of the $Q_i$'s.

Now the condition that the $R_i$'s are dependent in $\bbP^4$
is the determinant $G$ of the corresponding $5\times 5$ matrix
whose rows are the coordinates of the $R_i$'s.
Since the $R_i$ are cubic, we see that $G$ is also of degree $15$.

In the same spirit as was discussed in section \ref{n=2section} for Pascal's Theorem in the plane, these two polynomials can be computed explicitly with any modern symbolic algbra package.  We used Sagemath, and verified that $F=G$ as polynomials of degree $15$ in the variable coordinates of the $Q_i$'s.  This proves the theorem.

\begin{remark}
In Segre's paper, he also considered the two relevant determinants above.
He was unable to prove algebraically that these two determinant were equal.
However he argued via synthetic methods that their ratio must be a constant
(and he computed the constant!).  Hence our proof follows his overall line of argument, but the use of the computer simplifies things greatly.
\end{remark}

\end{document}